\documentclass{amsart}
\usepackage{graphicx}
\usepackage{amsmath}
\usepackage{mathrsfs}
\usepackage{amsmath, amscd}
\usepackage{amssymb}
\usepackage{tikz}
\usepackage{tikz-cd}
\usepackage{enumitem}

\newtheorem{theorem}{Theorem}[section]
\newtheorem*{theorem*}{Theorem}
\newtheorem*{convention*}{Convention}
\newtheorem{lem}[theorem]{Lemma}
\newtheorem{prop}[theorem]{Proposition}
\newtheorem{cor}[theorem]{Corollary}

\newtheorem{mainthm}{Theorem}

\theoremstyle{definition}
\newtheorem{defn}[theorem]{Definition}

\theoremstyle{remark}
\newtheorem{remark}[theorem]{Remark}

\numberwithin{equation}{section}

\newtheoremstyle{TheoremNum}
        {\topsep}{\topsep}              
        {\itshape}                      
        {}                              
        {\bfseries}                     
        {.}                             
        { }                             
        {\thmname{#1}\thmnote{ \bfseries #3}}
    \theoremstyle{TheoremNum}
    \newtheorem{thmrep}{Theorem}

\newcommand{\pu}[1]{PU(p^{#1})}
\newcommand{\spl}[1]{Sp_{2{#1}}(\F_p)}
\newcommand{\splq}[1]{Sp_{2{#1}}(\F_q)}
\newcommand{\slq}[1]{SL_{{#1}}(\F_q)}
\newcommand{\glq}[1]{GL_{{#1}}(\F_q)}

\newcommand{\C}{\mathbb{C}}
\newcommand{\F}{\mathbb{F}}
\newcommand{\Z}{\mathbb{Z}}

\newcommand{\Gq}[1]{GL_{#1}(\mathbb{F}_q)}

\newcommand{\opn}{\operatorname}

\newcommand{\op}{\operatorname{P}}
\newcommand{\al}{\alpha}
\newcommand{\sG}{\mathscr{G}}
\newcommand{\sN}{\mathscr{N}}
\newcommand{\hbpu}{ H^*(B\pu{m};\F_p)}

\newcommand{\gmm}[1]{\gamma_{p^{2m}-p^{#1}}}
\newcommand{\pf}{\operatorname{pf}}
\newcommand{\td}{\tilde}

\newcommand{\sJ}{\mathscr{J}}
\newcommand{\sT}{\mathscr{T}}
\newcommand{\cH}{\mathcal{H}}
\newcommand{\cY}{\mathcal{Y}}
\newcommand{\br}{\mathfrak{b}}
\newcommand{\eg}{\mathfrak{e}_g}

\newcommand{\oin}{\iota}



\begin{document}

\title[$H^*(B\pu{m})$ and invariant polynomials]{The cohomology of $B\pu{m}$ and invariant polynomials}

\author{Xing Gu}
\address{Institute for Theoretical Sciences, School of Science, Westlake University, 600 Dunyu Road, Sandun town, Xihu district, Hangzhou 310030, Zhejiang Province, China.}


\email{guxing@westlake.edu.cn}
\thanks{The author is supported by NSF Zhejiang, No.XHD24A0101.}



\subjclass[2020]{22E47, 55R40}
	


\keywords{projective unitary groups, polynomial invariants}

\begin{abstract}
	Let $p$ be an odd prime. For a compact Lie group $G$ and an elementary abelian $p$-group $A$ of $G$, one may define the Weyl group $W_A$ of $A$ in a similar fashion as defining the Weyl group of a maximal torus, such that $W_A$ acts on $H^*(BA;R)$ for any coefficient ring $R$, and the image of the restriction $H^*(BG;R)\to H^*(BA;R)$ lies in $H^*(BA;R)^{W_A}$, the sub-algebra of $H^*(BA:R)$ of $W_A$-invariant elements.  

	In this paper, we consider the projective unitary group $\pu{m}$ and one of its maximal elementary abelian $p$-subgroup $A_m$, of which the Weyl group is isomorphic to $\spl{m}$. Then the theory of $\spl{m}$-invariant polynomials over $\F_p$ may be applied to study the cohomology of $B\pu{m}$, the classifying space of $\pu{m}$. Following a theorem by Quillen, we deduce several theorems on $H^*(B\pu{m};\F_p)$ modulo the nilradical from results on invariant polynomials.

\end{abstract}

\maketitle
\section{Introduction}\label{sec:intro}
Let $n \geq 2$ be an integer. Let $PU(n)$ be the projective unitary group , i.e., the unitary group $U(n)$ modulo the central subgroup of scalar matrices. The cohomology of $BPU(n)$ is studied in various works including \cite{Kameko2008brown}, \cite{kono1975cohomology}, \cite{vavpetivc2005mod}, \cite{vezzosi2000chow}, \cite{vistoli2007cohomology}, \cite{gu2019cohomology}, \cite{gu2019some}, and \cite{fan2024cohomology}. However, apart from several isolated cases considered in the aforementioned works, our knowledge on the cohomology of $BPU(n)$ is limited. 

The cohomology of $BPU(n)$ plays a crucial role in the study of the topological period-index problem. For details see \cite{antieau2014topological}, \cite{antieau2014period} and \cite{gu2019topological}. Another interesting application of the cohomology of $BPU(n)$ is found in the study of the topological complexity of enumerative problems in algebraic geometry \cite{chen2024topological}.

In this paper, we fix a prime number $p > 2$ and an integer $m\geq 1$, and consider the cohomology algebra $\hbpu$. Let $\sN$ denote the nilradical of $\hbpu$, i.e., the ideal of nilpotent elements of $\hbpu$. Since cohomology algebras over $\F_p$ are graded commutative, the quotient $\hbpu/\sN$ is a graded commutative ring, necessarily concentrated in even dimensions. We employ Quillen's work \cite{quillen1971spectrum} to study this quotient.

There is a strong connection between $\hbpu/\sN$ and the subalgebras of the polynomial algebra $\F_p[x_1,y_1,\cdots,x_m,y_m]$ consisting of invariant polynomials under some $\spl{m}$-action and $GL_{2m}(\F_p)$-action. This is a key idea of this paper. 

We are able to determine a number of polynomial relations in the aforementioned quotient algebra. Moreover, we are able to acquire information on the integral cohomology of $B\pu{m}$. Let $\sT$ denote the ideal of $\hbpu$ generated by the mod $p$ reduction of torsion classes in $H^*(B\pu{m};\Z)$. We have a concrete description (Theorem~\ref{thm:ideals}) of $\sT$ modulo $\sN$. The ideal $\sT$ is a good approximation of the torsion subgroup of $H^*(B\pu{m};\Z)$, which is a $p$-primary torsion abelian group. 

It has been a long time since topologists noticed the connection between the cohomology of classifying spaces and polynomial invariants. This connection is discussed in \cite{adem2013cohomology} and \cite{wilkerson1983primer}, among other sources. 

Two collections of cohomology classes in $H^*(B\pu{m};\F_p)$ are essential in this paper, both of which are mod $p$ reductions of integral cohomology classes. 

The first collection of cohomology classes were studied in \cite{gu2019some}, where the author considers $p$-torsion integral cohomology classes $y_{p,k}\in H^{2(p^{k+1}+1)}(BPU(n);\Z)$. In this paper, we consider the mod $p$ reductions of $y_{p,i-1}$, denoted by $\alpha_i$. Indeed, for $p\mid n$, there is a generator $\bar\br$ of $H^3(BPU(n),\F_p)\cong\F_p$ such that 
\begin{equation}\label{eq:Steenrod}
	\alpha_i = \beta\op^{p^{i-1}}\op^{p^{i-2}}\cdots\op^1(\bar\br),
\end{equation}
where $\beta$ and $\op^k$ are the usual notations for Steenrod operations. 

The second collection of cohomology classes are the Chern classes of the conjugation representation, which is a complex representation of $PU(n)$ on $M(n)$, the complex vector space of $n\times n$ matrices. Let $\pi\colon U(n)\to PU(n)$ be the canonical projection. 
Let $\lambda\in PU(n)$, $\td\lambda\in U(n)$ satisfying $\lambda = \pi(\td\lambda)$, and let $\mu\in M(n)$. The conjugation action of $\lambda$ on $\mu$ is defined by $\lambda\circ\mu := \td\lambda\mu\td\lambda^{-1}$. 
This is a complex representation of $PU(n)$ of dimension $n^2$, or in other words, a homomorphism $PU(n)\to GL(n^2;\C)$, which is easily seen to be continuous. Therefore, it induces a map $BPU(n)\to BGL(n^2;\C)$. The pull-back of the universal Chern classes along this map are called the Chern classes of the conjugation  representation. The $i$th Chern classe of the conjugation representation of $PU(n)$ is denoted by $\gamma_i$.

The main theorems rely on Quillen's work on the spectrum of the cohomology ring of classifying spaces \cite{quillen1971spectrum}, which, modulo the nilradical, reduces the study of $H^*(BG;\F_p)$ for a compact Lie group $G$ to the study of the cohomology of the $p$-elementary abelian subgroups of $G$.
\begin{mainthm}\label{thm:ChernClasses}
	The Chern class $\gamma_i$ is nilpotent if 
	\begin{enumerate}[label = (\alph*)]
		\item $i$ is odd, or
		\item $i > p^{2m} - p^m$ and $i\neq p^{2m} - p^k$ for any positive integer $k$.
	\end{enumerate} 
\end{mainthm}

In Section~\ref{sec:Dickson}, we define polynomials $P_{i,j}$ $R_{i,j}$ and $D_{j}$ which appear in the following theorem. 
\begin{mainthm}\label{thm:polynomial}
	The following relations hold in $\hbpu$.
	\begin{equation}\label{eq:relation1}
		\sum_j^{i-1}\al_{i-j}^{p^j}\gmm{j} \equiv \sum_{j=i+1}^{2m}\al_{j-i}^{p^i}\gmm{j} \pmod{\sN}, \ 0\leq i\leq m-1,		
	\end{equation}

	\begin{equation}\label{eq:relation2}
		\begin{split}
			\gmm{i} \equiv
			\sum_{j=m-i}^m (-1)^{m+i+j}P_{m-i,j}(\al_1,\cdots,\al_{2j-1})^{p^{m-j}}&\gmm{m+j}
			\pmod{\sN}, \\
			& 0\leq i\leq m-1,
		\end{split}
	\end{equation}

	\begin{equation}\label{eq:relation4}
		\al_i \equiv \sum_{j=1}^{2m-1}(-1)^{j+1} R_{i,j}(\gmm{0},\cdots,\gmm{2m-1})\al_j\pmod{\sN},\ i\geq 2m,
	\end{equation}

	\begin{equation}\label{eq:relation3}
		D_i(\al_1,\cdots,\al_{2m})\equiv \gmm{i}  D_{2m}(\al_1\cdots,\al_{2m-1}) \pmod{\mathscr{N}},\ 0\leq i\leq 2m.
	\end{equation}
\end{mainthm}

\begin{remark}\label{rem:l=1}
	For $m = 1$, Vistoli \cite{vistoli2007cohomology} shows that all the mod $\sN$ equivalence relations in Theorem~\ref{thm:polynomial} are strict equations. For instance, corresponding to \eqref{eq:relation3}, we have $\al_2 = \gamma_{p^2-p}\al_1$.
\end{remark}
\begin{remark}\label{rem:l=2}
	For readers interested in concrete equations, we note that for $m = 2$, \eqref{eq:relation3} is as follows:
	\begin{equation}\label{eq:l=2}
		\begin{split}
		& \alpha_1^p \alpha_4  - \alpha_2 \alpha_3^p + \alpha_1 \alpha_2^{p^2}\equiv -\gamma_{p^4-p^3} (\alpha_1^{p^2+1} - \alpha_2^{p+1} + \alpha_1^p \alpha_3) \pmod{\mathscr{N}} ,\\
		& \alpha_1^{p^3+1} - \alpha_3^{p+1} + \alpha_2^p\alpha_4 \equiv \gamma_{p^4-p^2} (\alpha_1^{p^2+1} - \alpha_2^{p+1} + \alpha_1^p \alpha_3) \pmod{\mathscr{N}},\\
		& \alpha_1^{p^3}\alpha_2 - \alpha_2^{p^2}\alpha_3 + \alpha_1^{p^2}\alpha_4 \equiv - \gamma_{p^4-p} (\alpha_1^{p^2+1} - \alpha_2^{p+1} + \alpha_1^p \alpha_3) \pmod{\mathscr{N}},\\
		& \alpha_1^{p^3+p} - \alpha_2^{p^2+p} + \alpha_1^{p^2}\alpha_3^p \equiv \gamma_{p^4-1}(\alpha_1^{p^2+1} - \alpha_2^{p+1} + \alpha_1^p \alpha_3) \pmod{\mathscr{N}}.
	\end{split}
\end{equation}
\end{remark}	
\begin{defn}\label{def:subalg_G}
	The $\F_p$-subalgebra $\sG$ of $\hbpu$ is the subalgebra generated by $\al_i$ for $1\leq i\leq 2m-1$ and $\gmm{i}$ for $m\leq i\leq 2m$. 
	The ideal $\sJ$ of $\sG$ is the ideal generated by $\al_i$, $1\leq i\leq 2m-1$.
\end{defn}

\begin{remark}\label{rem:subalg_G}
	The algebra $\sG$ is unital since it contains $\gamma_0 = 1$. Let $\sG'$ be the $\F_p$-subalgebra of $\hbpu$ generated by $\al_i$ for all $i\geq 1$ and $\gamma_i$ for all $0\leq i\leq p^{2m}$. Let $\sJ'$ be the ideal of $\sG'$ generated by $\al_i$ for $i\geq 1$.
	As shown by Theorem~\ref{thm:polynomial}, we have 
	\[\sG\equiv\sG'\pmod{\sN} \textrm{ and } \sJ\equiv\sJ'\pmod{\sN}.\]
\end{remark}

For the next theorem, recall that $\sT$ denotes the ideal of $\hbpu$ generated by the mod $p$ reduction of torsion classes in $H^*(B\pu{m};\Z)$.
\begin{mainthm}\label{thm:ideals}
	Regarding $\sT$, $\sJ$ and $\sN$ as $\F_p$-submodules of $\hbpu$, we have 
	\[\sT \equiv \sJ \pmod{\sN}.\]
\end{mainthm}

\begin{remark}\label{rem:reduction_to_A}
	There exists an elementary abelian $p$-subgroup $A_m$ of $\pu{m}$ with the following properties: the pull-backs of the equivalences of Theorem~\ref{thm:polynomial} and Theorem~\ref{thm:ideals} to $H^*(BA_m;\F_p)$ yield strict equations in which both sides are not $0$.
\end{remark}

Section~\ref{sec:Dickson} reviews the theory of invariant polynomials over a field $K$ of positive characteristic, although only the case $K = \F_p$ is needed for the rest of the paper. In Section~\ref{sec:p-subgroup}, we introduce an important elementary abelian $p$ -subgroup $A_m$ of $\pu{m}$ and its conjugation representation. In Section~\ref{sec:Chern}, the Chern classes of the conjugation action of $\pu{m}$,  restricted on $A_m$, are identified as the Dickson invariants, i.e., the $GL_{n}(\F_q)$-invariant polynomials. In Section~\ref{sec:ImOftheta} we interpret the purely algebraic facts introduced in Section~\ref{sec:Dickson} in terms of the cohomology of $BA_m$ and $B\pu{m}$. In Section~\ref{sec:Quillen}, we introduce a key argument by Quillen and prove Theorem~\ref{thm:ChernClasses} and Theorem~\ref{thm:polynomial}. In Section~\ref{sec:torsion}, we prove Theorem~\ref{thm:ideals}.

\subsection*{Acknowledgement} The author would like to thank the editor and the referee for their kind support. The author is grateful to the referee for a correction to the statement and an  elegant proof of Proposition~\ref{pro:dicksonAlternative}.


\section{Some invariant polynomials}\label{sec:Dickson}
Let $p$ be an odd prime and $q = p^r$ for some integer $r > 0$. Let $\F_q$ be the finite field of order $q$.  This section provides a brief overview of the theory of invariant polynomials over $\F_q$ under some actions of $\glq{n}$, $\slq{n}$, $\splq{m}$. The exposition follows \cite{benson1993polynomial} and  \cite{wilkerson1983primer},  with minor adjustment to notation. In addition, some new results are presented.

Let $V$ be a vector space over $\F_q$ of finite dimension $n$. Let $K$ be an extension filed of $\F_q$, and let $V_K = K\otimes_{\F_q}V$.
\begin{remark}\label{rem:K_Fq}
	For the arguments in later sections, the only case to be considered is that of $q = p$ and $K=\F_p$, although in this section the full generality is preserved, for consistency with the references.	
\end{remark}

Let $V^{\vee}$ be the dual vector space of $V$, and let $V^{\vee}_K$ be the $K$-dual of $V_K$. Let $K[V]$ be the $K$-algebra of polynomial functions on on $V_K$ taking values in $K$. The $K$-vector space of degree-$1$ polynomials in $K[V]$ is precisely $V^{\vee}_K$.

The group $\glq{n}$ has the left action $V^{\vee}$ dual to the tautological right action on $V$. This action extends uniquely to an action on $K[V]$ compatible with the $K$-algebra structure, which is the $\glq{n}$-action considered this section. The $\slq{n}$-action considered here is the restriction of the aforementioned $\glq{n}$-action. For $n = 2m$, a non-degenerate, antisymmetric bilinear form $\Omega$ is to be defined on $V$, so that the $\splq{m}$-action is to be defined as the restriction of the $\glq{n}$-action compatible with $\Omega$. 

We begin with the $\glq{n}$-invariant and the $\slq{n}$-invariant polynomials. Let $x'_1\cdots,x'_n$ be a basis for $V^{\vee}$, and let $x_i\in V^{\vee}_K$ be the $K$-linear extension of $x_i'$. In what follows, we regard elements in $K[V][X]$ as polynomials over $K$ in variables $x_1,\cdots,x_n,X$.
Let 
\begin{equation*}
	\Delta_n(x_1,\cdots,x_n,X) = \opn{det}
	\begin{pmatrix}
		x_1      & \cdots x_n       & X      \\
		x_1^q    & \cdots x_n^q     & X^q    \\
		\vdots   & \vdots \vdots    & \vdots \\
		x_1^{q^n}& \cdots x_n^{q^n} & X^{q^n} 
	\end{pmatrix}
	\in K[V][X],
\end{equation*}

and let 
\begin{equation*}
	f_n(x_1,\cdots,x_n,X) = \prod_{x\in V^{\vee}}(X - x)\in K[V][X].
\end{equation*}
Let
		\begin{equation}\label{eq:def_E}
			E_{n,i}(x_1,\cdots,x_n):= \opn{det}
			\begin{pmatrix}
				x_1            & \cdots &x_n       \\
				x_1^q          & \cdots &x_n^q     \\
				\vdots         & \vdots &\vdots    \\
				\widehat{x_1^{q^i}} & \cdots &\widehat{x_n^{q^i}} \\
				\vdots         & \vdots & \vdots   \\
				x_1^{q^n}      & \cdots &x_n^{q^n}   
			\end{pmatrix}
		\end{equation}
where the hats indicate that the row $(x_1^{q^i},\cdots,x_n^{q^i})$ is absent. 

We write $\Delta_n(X)$, $f_n(X)$ and $E_{n,i}$ when $x_1,\cdots,x_n$ are clear from the context.

The main result of \cite{dickson1911fundamental} is restated in \cite{wilkerson1983primer} as the following
\begin{theorem}[Theorem~1.2, \cite{wilkerson1983primer}]\label{thm:Dickson}
	\begin{enumerate}[label = (\alph*)]
		\item The polynomial $f_n(X)$ is of the form
		\begin{equation}\label{eq:fExpanded}
			f_n(X) = X^{q^n} + \sum_{i=0}^{n-1}(-1)^{n-i}C_{n,i}(x_1,\cdots,x_n)X^{q^i}
		\end{equation}
		where $C_{n,i}(x_1,\cdots,x_n)\in \F_q[V]$.
		\item We have $E_{n,n}\mid E_{n,i}$ for all $0\leq i\leq n$ and $C_{n,i} = E_{n,i}/E_{n,n}$.

		\item The polynomials $C_{n,0},C_{n,1},\cdots,C_{n,n-1}$ are algebraically independent in $K[V]$, and we have
		\begin{equation*}
			K[V]^{\Gq{n}} = K[C_{n,0},C_{n,1},\cdots,C_{n,n-1}].
		\end{equation*} 
	\end{enumerate}
\end{theorem}
When $n$ is clear from the context, we write $C_i$ for $C_{n,i}$ and $E_i$ for $E_{n,i}$.

\begin{theorem}[\cite{benson1993polynomial}, Theorem 8.2.1]\label{thm:SLinvarint}
	Let notations be as above. We have
	\begin{equation*}
		K[V]^{\slq{n}} = K[E_n,C_{1},\cdots,C_{n-1}].
	\end{equation*}
\end{theorem}

We have the following alternative description of $E_n$ and $C_i$.
\begin{prop}\label{pro:deltan}
	As a polynomial in the variables $x_i$, $E_n$ factors as $E_n = \prod\omega$, where $\omega$ runs over linear factors of the form
	\[\omega = x_i + \sum_{j=i+1}^n t_jx_j,\ t_j\in\F_q.\]
\end{prop}
\begin{proof}
	Since the base field $K$ is of characteristic $p$, we may apply the ``Freshman's dream'', i.e.,  $(x+y)^p = x^p + y^p$, and compute
	\[\omega^{p^k} =  x_i^{p^k} + \sum_{j=i+1}^n t_jx_j^{p^k},\ t_j\in\F_q,\]
	From which it follows $\omega\mid E_n$. Comparing the degrees, we have $\prod\omega = t E_n$ for some $t\in\F_q^{\times}$. Comparing the coefficient of the term $\prod_i x_i^{q^{i-1}}$, we have $t = 1$.
\end{proof}

\begin{prop}\label{pro:dicksonAlternative}
	Let $1\leq m \leq n$. Then we have
	\begin{equation*}
		C_{n,i}(x_1,\cdots,x_{n-m},0,\cdots,0) = 
		\begin{cases}
			0, \ 1\leq i < m,\\
			C_{n-m, i-m}(x_1,\cdots,x_{n-m})^{q^m},\  m \leq i\leq n.
		\end{cases}
	\end{equation*} 

\end{prop}
\begin{proof}
	Consider the homomorphism
	\[\pi\colon K[x_1,\cdots,x_n]\to K[x_1,\cdots,x_{n-m}]\]
	such that $\pi(x_i) = x_i$ for $1\leq i\leq n-m$ and $\pi(x_i) = 0$ otherwise. This extends to a homomorphism between polynomial rings in $X$:
	\[\pi\colon K[x_1,\cdots,x_n][X]\to K[x_1,\cdots,x_{n-m}][X].\]
	Let $U$ be the subspace of the $\F_q$-vector space $V^{\vee}$ generated by $x_1,\cdots,x_{n-m}$. Then $\pi$ restricts to a linear projection $V^{\vee}\to U$ whose kernel is the subspace generated by $x_{n-m+1},\cdots,x_n$, which has $q^m$ elements. Therefore the preimage of each $y\in U$ has $q^m$ elements.

	Write $f_n(X) = \prod_{x\in V^{\vee}}(X - x)$. The argument above shows 
	\[\pi(f_n(X)) = \prod_{x\in V^{\vee}}(X - \pi(x)) = \prod_{y\in U}(X - y)^{q^m} = f_{n-m}(X)^{q^m}.\]
	Since $\F_q$ is of characteristic $p$, the ``Freshman's dream'' applies, and we have
	\begin{equation*}
		\begin{split}
			& \pi(f_n(X)) = f_{n-m}(X)^{q^m}\\
			= & [X^{q^{n-m}} + \sum_{j=0}^{n-m-1}(-1)^{n-m-j}C_{n-m,j}X^{q^j}]^{q^m}\\
			= & X^{q^n} + \sum_{i=m}^{n-1}(-1)^{n-i}C_{n-m,i-m}^{q^m}X^{q^{i}}.
		\end{split}
	\end{equation*}
	Comparing this with the equation
	\[\pi(f_n(X)) = X^{q^n} + \sum_{i=0}^{n-1}(-1)^{n-i}\pi(C_{n,i})X^{q^i},\]
	the desired equations follow.
\end{proof}
We consider the $\splq{m}$-invariant polynomials. For $n = 2m$, let $u_i,v_i$, $1\leq i\leq m$ be a basis for $V$, and let $\Omega$ be the antisymmetric bilinear form on $V^{\vee}$, which extends to one on $V_K^{\vee}$, as follows:
\[\Omega(u_i, u_j) = \Omega(v_i,v_j) = 0,\ \Omega(u_i,v_j) = \delta_{ij}.\]
Consider the right $\splq{m}$-action on $V$ preserving $\Omega$. The dual left $\splq{m}$-action on $V^{\vee}$ extends uniquely to an action on $K[V]$, which is the $\splq{m}$-action considered for the rest of this paper.  

For $w= \sum_{i=1}^m (t_i u_i + s_i v_i)$ where $t_i,s_i\in K$,  we have the $k$-fold Frobenius of $w$, denoted by
\[w^{q^k} = \sum_{i=1}^m (t_i^{q^k}u_i + s_i^{q^k}v_i).\]  

Let $x_i,y_i$, $1\leq i\leq m$ be the basis for $V^{\vee}$ dual to $u_i, v_i$. Consider the $\splq{m}$-invariant polynomial
\begin{equation}\label{eq:def_H}
	H_{2m,i}(x_1,y_1,\cdots,x_,y_m) = \sum_{j=1}^m(x_j y_j^{q^i} - x_j^{q^i}y_j).
\end{equation}
We write $H_i$ instead of $H_{2m,i}$ when $m$ is clear from the context.
 Regarding $H_i$ as a polynomial function on $V$, for $w\in V$, $i < j$, we
 have 
\begin{equation}\label{eq:alphaB}
	\Omega(w^{q^i}, w^{q^j}) = 
	(H_{j-i}(x_1,y_1,\cdots,x_m,y_m)^{q^i})(w).
\end{equation}
For the rest of this section, we write $H_i$ for $H_i(x_1,y_1,\cdots,x_m,y_m)$.
\begin{lem}[\cite{benson1993polynomial}, Lemma 8.3.1]\label{lem:algIndependenceh}
	$H_1,\cdots,H_{2m}$ are algebraically independent, and $K[V]$ is a finite separable extension of $K(H_1,\cdots,H_{2m})$.
\end{lem}

For an antisymmetric matrix $\mu$, Let $\pf(\mu)$ denote the Pfaffian of $\mu$. (See Proposition~8.3.3 of \cite{benson1993polynomial} for a formula for the Pfaffian over fields of positive characteristic.) 
Define polynomials 
\[D_{2m,i}(Y_1,\cdots,Y_{2m})\in K[Y_1,\cdots, Y_{2m}],\ 0 \leq i\leq 2m\]
as follows. Let $(G_{i,j})_{0\leq i,j\leq 2m}$ be the $(2m+1)\times (2m+1)$ skew-symmetric matrix with entries in the polynomial ring $K[Y_1,\cdots,Y_{2m}]$,  such that for $j > i$, $G_{i,j} = Y_{j-i}^{q^i}$. Then let  
\begin{equation}\label{eq:def_D}
	D_{2m,i}(Y_1,\cdots,Y_{2m}) = \pf
	\begin{pmatrix}
		G_{0,0} &\cdots & \widehat{G}_{0,i} &\cdots & G_{0,2m}\\
		\vdots & & \vdots & & \vdots\\
		\widehat{G}_{i,0}& \cdots &\widehat{G}_{i,i}& \cdots & \widehat{G}_{i,2m}\\
		\vdots & & \vdots & & \vdots\\
		G_{2m,0} &\cdots & \widehat{G}_{2m,i} &\cdots & G_{2m,2m}
	\end{pmatrix}.
\end{equation}
The hats indicate missing rows and columns. When $m$ is clear from the context, we write $D_i$ for $D_{2m,i}$. 
By \eqref{eq:alphaB}, we have
\begin{equation}\label{eq:Pfaffian}
	D_{2m,i}(H_1,\cdots,H_{2m}) = \pf
		\begin{pmatrix}
			\Omega(w,w)& \cdots& \widehat{\Omega}(w,w^{q^i}) &\cdots & \Omega(w,w^{q^{2m}})\\
			\vdots&        & \vdots            &       & \vdots  \\
			\widehat{\Omega}(w^{q^i},w) & \cdots &  \widehat{\Omega}(w^{q^i},w^{q^i}) & \cdots &\widehat{\Omega}(w^{q^i}, w^{q^{2m}})\\
			\vdots&    &  \vdots &    & \vdots\\
			\Omega(w^{q^{2m}},w)&\cdots & \widehat{\Omega}(w^{q^{2m}}, w^{q^i}) &\cdots &\Omega(w^{q^{2m}},w^{q^{2m}})
		\end{pmatrix}.
\end{equation}
By Proposition~8.3.3 of \cite{benson1993polynomial} we have
\begin{prop}\label{pro:PfaffianEqProduct}
	For $0 \leq i\leq 2m$, we have 
	\begin{equation}\label{eq:PfaffianEqProduct}
		\sum_i D_{i}(H_1,\cdots,H_{2m})X^{q^i} =E_{2m}(x_1,y_1,\cdots,x_m,y_m) \prod_{x\in V^{\vee}}(X - x).
	\end{equation}
	The polynomial $D_{2m}(Y_1,\cdots,Y_{2m})$ is independent of $Y_{2m}$, hence may be written as $D_{2m}(Y_1,\cdots,Y_{2m-1})$, and we have 
	\begin{equation}\label{eq:Dm0}
		D_{2m}(H_1,\cdots,H_{2m-1}) = E_{2m}(x_1,y_1,\cdots,x_m,y_m).
	\end{equation}
\end{prop}
\begin{proof}
	Equation~\eqref{eq:PfaffianEqProduct} is Proposition 8.3.3, \cite{benson1993polynomial}. It follows from \eqref{eq:def_D} and Lemma~\ref{lem:algIndependenceh} that $D_{2m}$ is independent of $Y_{2m}$. Equation~\eqref{eq:Dm0} following from a comparison of the leading coefficients of both sides of \eqref{eq:PfaffianEqProduct}.
\end{proof}

Comparing \eqref{eq:fExpanded}, \eqref{eq:PfaffianEqProduct} and \eqref{eq:Dm0}, we obtain
\begin{cor}\label{cor:CandD}
	For $0\leq i\leq 2m$, we have
	\begin{equation*}
			D_i(H_1,\cdots,H_{2m}) = (-1)^iC_i(x_1,y_1,\cdots,x_m,y_m) D_{2m}(H_1,\cdots,H_{2m-1}).
	\end{equation*}
	\qed
\end{cor}

\begin{prop}[\cite{benson1993polynomial}, Proposition 8.3.7]\label{pro:Pij}
	There are polynomials 
	\[P_{i,j}\in K[Y_1,\cdots,Y_{2j-1}],\ 1\leq i\leq j,\] 
	independent of $m$, such that for $1\leq i\leq m$, 
	\[D_{m-i}(H_1,\cdots,H_{2m}) = \sum_{j=i}^m P_{i,j}^{q^{m-j}}(H_1,\cdots,H_{2j-1})^{q^{m-j}}D_{m+j}(H_1,\cdots,H_{2m}).\] 
	As a polynomial in $x_1,y_1,\cdots,x_m,y_m$, $P_{i,j}(H_1,\cdots,H_{2j-1})$ is of degree $q^{2j} - q^{j-i}$.
	\qed
\end{prop}

The following theorem fully describes the ring $K[V]^{\splq{m}}$. 
\begin{theorem}[\cite{benson1993polynomial}, Theorem 8.3.11]\label{thm:sympInv}
	The ring $K[V]^{\splq{m}}$ is generated by the elements
	\[C_0,\cdots,C_{2m-1},H_1,\cdots,H_{2m}.\]
	subject only to the following relations:
	\begin{equation}\label{eq:sympInv1}
		\sum_{j=0}^{i-1}(-1)^jH_{i-j}^{q^j}C_j = \sum_{j=i+1}^{2m}(-1)^jH_{j-i}^{q^j}C_j,\ 0\leq i\leq m-1,
	\end{equation}
	\begin{equation}\label{eq:sympInv2}
		C_i = \sum_{j=m-i}^mP_{m-i,j}(H_1,\cdots,H_{2j-1})^{q^{m-j}}C_{m+j},\ 0\leq i\leq m-1.
	\end{equation}
\end{theorem}

\begin{cor}\label{cor:sympInv}
	The ring $K[V]^{\splq{m}}$ is generated by the elements
	\[C_m,\cdots,C_{2m-1},H_1,\cdots,H_{2m-1}.\]
\end{cor}
\begin{proof}
	This follows from \eqref{eq:sympInv1} with $i = 0$, and \eqref{eq:sympInv2}.
\end{proof}
To avoid ambiguity, we write $C_{2m,i}$ and $H_{2m,i}$ instead of $C_i$ and $H_i$ throughout the rest of this section.

Let $\cH$ denote the ideal of $K[V]$ generated by $H_{2m,1},\cdots,H_{2m,2m-1}$. Let 
\[\rho\colon K[V]\to K[V]/\cH\] 
be the quotient homomorphism.
\begin{lem}\label{lem:alg_indep}
	The elements $\rho(C_{2m,i})$ for $m\leq i\leq 2m-1$ are algebraically independent in $K[V]/\cH$.
\end{lem}
\begin{proof}	
	Let $\cY$ be the ideal of $K[V]$ generated by $y_1,\cdots,y_m$. We have $\cH\subseteq\cY$. Let $\pi\colon K[V]\to K[V]/\cY$ be the quotient homomorphism. It suffices to show that $\pi(C_{2m,i})$ for $m\leq i\leq 2m-1$ are algebraically independent in 
	\[K[V]/\cY\cong K[x_1,\cdots,x_m].\] 
	For $m\leq i\leq 2m-1$, we have
	\begin{equation*}
		\begin{split}
			\pi(C_{2m,i}) & = C_{2m,i}(x_1,0,x_2,0,\cdots,x_m,0) \\
			& = C_{2m,i}(x_1,x_2,\cdots,x_m,0,\cdots,0) \ (\textrm{since $C_{2m,i}$ is $\glq{m}$-invariant})\\
			& = C_{m,i-m}(x_1,x_2,\cdots,x_m)^{q^m}(\textrm{by Proposition~\ref{pro:dicksonAlternative}}).\\
		\end{split}	
	\end{equation*}
	By Theorem~\ref{thm:Dickson}, the elements $\pi(C_{2m,i})$ are algebraically independent, and the proof is completed.
\end{proof}
Let $\cH'$ be the ideal of $K[V]^{\splq{m}}$ generated by $H_{2m,1},\cdots,H_{2m,2m-1}$.
\begin{prop}\label{pro:idealH}
	In $K[V]$, we have 		$\cH' = K[V]^{\splq{m}}\cap\cH$.
	
\end{prop}
\begin{proof}
	The inclusion $\cH' \subseteq K[V]^{\splq{m}}\cap\cH$ is clear. It remains to show the inclusion of the other direction. Let $u\in K[V]^{\splq{m}}\cap\cH$. By Corollary~\ref{cor:sympInv}, we have a polynomial $F\in K[X_m,\cdots,X_{2m-1}]$ and $u'\in\cH'$ such that
	\[u = F(C_{2m,m},\cdots,C_{2m,2m-1}) + u'.\]
	By $u\in\cH$, we have 
	\[0 = \rho(u) = F(\rho(C_{2m,m}),\cdots,\rho(C_{2m,2m-1})).\]
	By Lemma~\ref{lem:alg_indep}, we have $F =0$, and $u = u'\in\cH'$, which concludes the proof.
\end{proof}

\begin{lem}[\cite{benson1993polynomial}, Lemma~8.3.8]\label{lem:OmegaDet}
	If $w_0,\cdots,w_{2m}$ are elements of $V$, then
	\[\sum_{j=0}^{2m}(-1)^j\Omega(w_0,w_j)\opn{det}(w_0,\cdots,w_{j-1},w_{j+1},\cdots,w_{2m}) = 0.\]
	\qed
\end{lem}
For the rest of this section, we write $C_i$ for $C_{2m,i}(x_1,y_1,\cdots,x_m,y_m)$, unless otherwise noted.
\begin{prop}\label{pro:h2m}
	There are polynomials $R_{2m,i,j}$ (or simply $R_{i,j}$ when $m$ is clear from the context) for $1\leq j\leq 2m-1$, such that for $i\geq 2m$, we have
	\begin{equation}\label{eq:hi}
		H_i = \sum_{j=1}^{2m-1}(-1)^{j+1} R_{i,j}((-1)^0C_0,\cdots,(-1)^{2m-1}C_{2m-1})H_j,\ i\geq 2m.
	\end{equation}
\end{prop}
\begin{proof}
	Let $i\geq 2m$. For $v\in V$, take $w_j$ in Lemma~\ref{lem:OmegaDet} to be $w_j = w^{q^j}$ for $j\leq 2m-1$ and $w_{2m} = w^{q^i}$. Then Lemma~\ref{lem:OmegaDet} yields 
	\begin{equation*}
		\begin{split}
			& H_i\opn{det}(w,\cdots,w^{q^{2m-1}}) \\ = & \sum_{j=0}^{2m-1}(-1)^{j+1} H_j\opn{det}(w,\cdots,w^{q^{j-1}},w^{q^{j+1}},\cdots, w^{q^{2m-1}},w^{q^i}).
		\end{split}
	\end{equation*}
	
	Since $H_0 = 0$, we obtain  
	\begin{equation*}
		\begin{split}
			& H_i\opn{det}(w,\cdots,w^{q^{2m-1}}) \\ = & \sum_{j=1}^{2m-1}(-1)^{j+1} H_j\opn{det}(w,\cdots,w^{q^{j-1}},w^{q^{j+1}},\cdots, w^{q^{2m-1}},w^{q^i}).
		\end{split}
	\end{equation*}

	The determinants on both sides of the equation may be regarded as functions sending $w$ to a value in $K$. Indeed, they are polynomials in the linear functions  $x_i,y_i$. The polynomial $\opn{det}(w,\cdots,w^{q^{j-1}},w^{q^{j+1}},\cdots,w^{q^i})$ is divisible by any polynomial in $x_i,y_i$ of degree $1$. By Proposition~\ref{pro:deltan}, we have 
	\begin{equation}\label{eq:detDivisible}
		\opn{det}(w,\cdots,w^{q^{2m-1}})\mid\opn{det}(w,\cdots,w^{q^{j-1}},w^{q^{j+1}},\cdots,w^{q^{2m-1}}, w^{q^i}),
	\end{equation}
	and a routine computation shows that the quotient is in $\F_q[V]^{\glq{2m}}$. By Proposition~\ref{thm:Dickson},
	this quotient is equal to $R_{2m,i,j}((-1)^0C_0,\cdots, (-1)^{2m-1}C_{2m-1})$ for some polynomial $R_{2m,i,j}$ in $2m$ variables, and the conclusion follows.
\end{proof}


\section{An elementary abelian $p$-subgroup of $\pu{m}$}\label{sec:p-subgroup}
For any integer $n>1$, let $M(n)$ denote the complex vector space of $n\times n$ matrices. Let $f\colon U(n)\to PU(n)$ be the quotient map.
\begin{defn}\label{def:conj}
	Let $\lambda\in PU(n)$ and let $\td{\lambda}\in f^{-1}(\lambda)$. The \textit{conjugation representation} of $PU(n)$, which we denote by $\Psi$, is defined by 
		\begin{equation*}
			\Psi: PU(n)\times M(n)\to M(n),\ (\lambda,\mu)\mapsto \lambda\circ \mu := \td{\lambda} \mu\td{\lambda}^{-1}.
		\end{equation*}
	The restriction of the conjugation representation to a subgroup of $PU(n)$ is also called a conjugation representation. 
\end{defn} 
\begin{remark}
	The more natural way to define $\Psi$ is by 
	\begin{equation*}
		\Psi: PU(n)\times M(n)\to M(n),\ (\lambda,\mu)\mapsto \lambda\circ \mu := \td{\lambda} \mu\td{\lambda}^\dagger.
	\end{equation*}
	where $\td{\lambda}^\dagger$ is the conjugate-transpose of $\lambda$. Since $\td{\lambda}$ is unitary, we have $\td{\lambda}^\dagger = \td{\lambda}^{-1}$. The choice made in Definition~\ref{def:conj} works better for computing characteristic classes.
\end{remark}

Let $(\C^n)^{\vee}$ denote the dual of $\C^n$. There is a canonical isomorphism $M(n)\cong \C^n\otimes(\C^n)^{\vee}$. For $\lambda\in PU(n)$, let $\td{\lambda}\in U(n)$ be a matrix representing $\lambda$, and let $v\in\C^n$, $\varphi\in(\C^n)^{\vee}$. The conjugation action of $PU(n)$ on $ M(n) $ is given by
	\begin{equation}\label{eq:conjp}
		\lambda\circ (v\otimes\varphi) = \td{\lambda}v\otimes \varphi \td{\lambda}^{-1},
	\end{equation}
The notations $\td{\lambda}v$ and $\varphi \td{\lambda}^{-1}$ denote the usual multiplication of matrices, with $v$ regarded as a column vector and $\varphi$ a row vector.

For the rest of this section, we fix an odd prime $p$. Let $V = \C^p$, and let $ V^{\vee}$ denote the dual of $V$. Let $ M :=  M(p)$. Then we have the canonical isomorphism $ M\cong V\otimes V^{\vee}$ as the complex vector space of $p\times p$ matrices. 

Let $e_1,\cdots,e_p$ be the canonical basis for $V$, and we order the elements 
\[e_{i_1}\otimes\cdots\otimes e_{i_m}\in V^{\otimes m},\ i=1,\cdots,p\]
lexicographically. Then they form an ordered basis for $V^{\otimes m}$, which identifies $V^{\otimes m}$ with $\C^{p^m}$. In a similar fashion we identify $(V^{\vee})^{\otimes m}$ with $(\C^{p^m})^{\vee}$.

We regard $M(p^m)$ as $V^{\otimes m}\otimes (V^{\vee})^{\otimes m}$, and further identify it as $ M^{\otimes m}$ via the ``shuffle'' isomorphism
	\begin{equation*}
		\begin{split}
			\opn{Sh}\colon (V\otimes V^{\vee})^{\otimes m} &\xrightarrow{\cong} V^{\otimes m}\otimes (V^{\vee})^{\otimes m}\cong \C^{p^m}\otimes(\C^{p^m})^{\vee},\\
			(v_1\otimes\varphi_1)\otimes\cdots\otimes(v_m\otimes\varphi_m) &\mapsto (v_1\otimes\cdots\otimes v_m)\otimes (\varphi_1\otimes\cdots\otimes\varphi_m). 
		\end{split}	
	\end{equation*}

In particular, for $\td{\lambda}_i\in U(p)$, $1\leq i\leq m$, $\td{\lambda}_1\otimes\cdots\otimes\td{\lambda}_m$ is identified as a matrix in $ M(p^m)$ via $\opn{Sh}$. The left multiplication of  $\td{\lambda}_1\otimes\cdots\otimes\td{\lambda}_m$ with a column vector in $\C^{p^m}\cong V^{\otimes m}$ is given by 
\[(\td{\lambda}_1\otimes\cdots\otimes\td{\lambda}_m)(v_1\otimes\cdots\otimes v_m) = \td{\lambda}_1v_1\otimes\cdots\otimes\td{\lambda}_mv_m,\]
and this identification is compatible with the multiplication of matrices, in the sense that we have
\[(\td{\lambda}_1\otimes\cdots\otimes\td{\lambda}_m)(\td{\lambda}_1'\otimes\cdots\otimes\td{\lambda}_m') = \td{\lambda}_1\td{\lambda}_1'\otimes\cdots\otimes\td{\lambda}_m\td{\lambda}_m'.\]
It is easily verified that $\td{\lambda}_1\otimes\cdots\otimes\td{\lambda}_m$ is indeed a member of $U(p^m)$. Let
 \[\lambda_1\otimes\cdots\otimes\lambda_m = f(\td{\lambda}_1\otimes\cdots\otimes\td{\lambda}_m).\] Again, the tensor product is compatible with the multiplication rules of $PU(p)$ and $\pu{m}$, in the sense that for $\lambda_i,\lambda_i'\in PU(p)$, we have 
\[(\lambda_1\otimes\cdots\otimes\lambda_m)(\lambda_1'\otimes\cdots\otimes\lambda_m') = \lambda_1\lambda_1'\otimes\cdots\otimes\lambda_m\lambda_m'.\]
A routine computation yields the following
\begin{lem}\label{lem:conjTensor}
	For $1\leq i\leq m$, let $\lambda_i\in PU(p)$, and let $\td{\lambda}_i\in U(p)$ representing $\lambda_i$. Let $v_i\in V$, $\varphi_i\in V^{\vee}$. Then we have 
	\[(\lambda_1\otimes\cdots\otimes\lambda_m)\circ (v_1\cdots v_m\otimes\varphi_1\otimes\cdots\otimes\varphi_m) = \opn{Sh}(\lambda_1\circ (v_1\otimes\varphi_1)\otimes\cdots\otimes\lambda_m\circ(v_m\otimes\varphi_m)).\] 
	\qed
\end{lem}

We proceed to consider an elementary $p$-subgroups of $\pu{m}$. The special case $m=1$ is studied in details in \cite{kono1993brown} and \cite{vistoli2007cohomology}.

More generally, nontoral elementary abelian $p$-subgroups of $PU(n)$ are studied in \cite{griess1991elementary} 
and \cite{andersen2008classification}. 

For $m =1$, let  
\begin{equation}\label{eq:sigmaHat}
	\omega := e^{2\pi i/p},\ 
	\tilde{\sigma} := 
	\begin{pmatrix}
		\omega & & & \\
		& \ddots & & \\
		& & \omega^{p-1} & \\
		& & & 1
	\end{pmatrix},\
	\tilde{\tau} :=\begin{pmatrix}
		& 1\\
		I_{p-1} &
	\end{pmatrix}.
\end{equation} 
Let $\sigma = f(\td{\sigma})$ and $\tau = f(\td{\tau})$. For $i,j = 0,\cdots,p - 1$, we define $\mu_{i,j}\in M$ as follows:
\begin{equation}\label{eq:Aij}
	\mu_{i,0} = 
	\begin{pmatrix}
		0 & I_i \\
		I_{p - i} & 0
	\end{pmatrix},\ 
	\mu_{0,j} = 
	\begin{pmatrix}
		\omega^{(p-1)j} & & & & \\
		& \omega^{(p-2)j} & & & \\
		& & \ddots & & \\
		& & & \omega^j & \\
		& & & & 1 \\
	\end{pmatrix},\ 
	\mu_{i,j} = \mu_{i,0}\mu_{0,j}.
\end{equation}
Notice that the formally different definitions of $\mu_{0,0}$ coincide with one another, since they all yield $\mu_{0,0} = I_p$. Let $M_{i,j}$ be the linear subspace of $M$ spanned by $\mu_{i,j}$.

Let $A_1$ be the subgroup of $PU(p)$ generated by $\sigma$ and $\tau$. With the notations in \eqref{eq:sigmaHat}, we have
\[\td{\sigma}\td{\tau} = \omega\td{\tau}\td{\sigma},\]
and therefore, we have $\sigma\tau = \tau\sigma$. It follows that we have 
$A_1\cong\Z/(p)\times\Z/(p)$.
Let $\iota\colon A_1\to PU(p)$ be the inclusion. The conjugate representation $\Psi$ of $PU(p)$ with ambient space $M$ has a restriction on $A_1$, which we denote by $\iota^*(\Psi)$. 

\begin{lem}\label{lem:split}
	As the ambient space of the representation $\oin^*(\Psi)$, $ M$ splits as follows:
	\begin{equation*}
		 M = \bigoplus_{0\leq i,j\leq p-1}M_{i,j}.
	\end{equation*}
	Each $M_{i,j}$ is an invariant subspace of $\oin^*(\Psi)$ satisfying
	\begin{equation}\label{eq:sigmaAij}
		\sigma\circ \mu_{i,j} = \omega^i \mu_{i,j},\ \tau\circ \mu_{i,j} = \omega^j \mu_{i,j}.
	\end{equation}	
\end{lem}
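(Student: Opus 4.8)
The plan is to verify the claimed decomposition by a direct computation with the explicit matrices $\tilde\sigma$ and $\tilde\tau$ defining $\bar\theta$, using the fact that conjugation action of an element $[A]$ sends $M$ to $AMA^{-1}$ and that, for a representation decomposed into lines, it suffices to check each generator acts on each $A_{i,j}$ by a scalar. Since $V^2$ is generated by $\sigma$ and $\tau$, the whole statement reduces to the two scalar equations in \eqref{eq:sigmaAij} together with the observation that the $p^2$ matrices $A_{i,j}$ are linearly independent, hence form a basis of $\M$ (which has dimension $p^2$), giving the direct sum splitting for free once the eigenvalue relations are established.

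First I would record how $\tilde\sigma$ and $\tilde\tau$ act by conjugation on the standard matrix units $E_{a,b}$: conjugation by the diagonal matrix $\tilde\sigma = \mathrm{diag}(\omega,\omega^2,\dots,\omega^{p-1},1)$ multiplies $E_{a,b}$ by the ratio of its diagonal entries, i.e. by a power of $\omega$ depending on $a-b$, while conjugation by the cyclic permutation matrix $\tilde\tau$ permutes the matrix units cyclically. Next I would identify $A_{0,j}$ with (a scalar multiple of) a sum of matrix units along the diagonal — in fact $A_{0,j}$ is itself diagonal, a power-scaling of $\tilde\sigma$ — so that conjugation by $\tilde\tau$ cyclically shifts the diagonal entries and hence sends $A_{0,j}$ to $\omega^{j}A_{0,j}$ after reindexing; I would choose the exponents in the definition \eqref{eq:Aij} precisely so that this shift produces the clean eigenvalue $\omega^j$. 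Then, using $A_{i,0}$ as the $i$th power of the cyclic permutation matrix (so that $A_{i,j} = A_{i,0}A_{0,j}$ is the natural ``monomial matrix'' basis element), conjugation by $\tilde\sigma$ on $A_{i,0}$ shifts rows against columns by $i$ steps and therefore scales by $\omega^i$, while conjugation by $\tilde\tau$ fixes $A_{i,0}$ up to the same cyclic bookkeeping. Combining these with the multiplicativity of conjugation on the product $A_{i,0}A_{0,j}$ yields $\sigma\circ A_{i,j}=\omega^i A_{i,j}$ and $\tau\circ A_{i,j}=\omega^j A_{i,j}$.

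Finally I would note that since the scalar $(\omega^i,\omega^j)$-eigencharacters are pairwise distinct for distinct $(i,j)$ with $0\le i,j\le p-1$, the lines $\C A_{i,j}$ are automatically independent, confirming $\M=\bigoplus_{i,j}\C A_{i,j}$ and that each is $\theta^*(\Psi)$-invariant. The main obstacle is purely bookkeeping: getting the exponents in the definitions of $A_{i,0}$ and $A_{0,j}$ to line up so that the cyclic permutation $\tilde\tau$ acts on $A_{0,j}$ by exactly $\omega^j$ rather than by some shifted power, and checking that the degenerate cases (the several formally different formulas for $A_{0,0}$, and products where $i$ or $j$ wraps around mod $p$) are consistent. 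I expect no conceptual difficulty beyond carefully tracking indices modulo $p$ in the conjugation formulas for permutation and diagonal matrices.
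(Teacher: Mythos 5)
Your proof is correct. The first half---verifying the eigenvalue relations \eqref{eq:sigmaAij} by computing how conjugation by the diagonal matrix $\tilde\sigma$ and the cyclic permutation $\tilde\tau$ acts on $A_{i,0}$ and $A_{0,j}$ and then using multiplicativity of conjugation on the product $A_{i,0}A_{0,j}$---is exactly the ``straightforward computation'' the paper performs. Where you genuinely diverge is in establishing that the $A_{i,j}$ span $\M$: the paper groups the $A_{i,j}$ by the shifted-diagonal subspaces $\M_i$ spanned by $\{E_{i+j,j}\}$ and exhibits an explicit invertible Vandermonde matrix of coordinates, whereas you invoke the general fact that nonzero simultaneous eigenvectors for pairwise distinct characters of the abelian group $V^2$ are linearly independent, and then count: $p^2$ distinct characters $(\omega^i,\omega^j)$ against $\dim_\C\M = p^2$. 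Both arguments are complete (you should note, as is obvious, that each $A_{i,j}$ is nonzero---it is a product of a permutation matrix and an invertible diagonal matrix); yours is more conceptual and shorter, while the paper's Vandermonde computation has the side benefit of identifying explicitly which matrix units each $A_{i,j}$ is supported on, which is not needed elsewhere. The bookkeeping concerns you flag (exponent conventions in $A_{0,j}$, wrap-around mod $p$) are real but resolve exactly as you predict, so there is no gap.
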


\begin{proof}
	A straightforward computation yields \eqref{eq:sigmaAij}, from which we deduce that $M_{i,j}$ are pairwise different invariant subspaces of $\oin^*(\Psi)$. Therefore, we have
	\begin{equation*}
		\bigoplus_{0\leq i,j\leq p-1}M_{i,j}\subseteq M.
    \end{equation*}
	Notice that both sides of ``$\subseteq$'' are of dimension $p^2$. Therefore, the two sides are equal and the proof is completed. 
\end{proof}

For $m \geq 1$, let 
\begin{equation}\label{eq:def_sigma}
	\sigma_i = 1\otimes\cdots 1\otimes\underbrace{\sigma}_{\text{$i$th tuple}}\otimes 1\cdots\otimes1\in\pu{m}.
\end{equation}
Similarly, let
\begin{equation}\label{eq:def_tau}
	\tau_i = 1\otimes\cdots 1\otimes\underbrace{\tau}_{\text{$i$th tuple}}\otimes 1\cdots\otimes1\in\pu{m}.
\end{equation}
For $m\geq 1$, let 
	\begin{equation*}
		\mathbb{I} = \F_p^{2m} = \{(i_1,j_1,\cdots,i_m,j_m)\mid i_k,j_k\in \F_p\}.
	\end{equation*}
	For $(i_1,j_1,\cdots,i_m,j_m)\in\mathbb{I}$, let
	\begin{equation*}
		\mu_{i_1,j_1,\cdots,i_m,j_m} = \mu_{i_1,j_1}\otimes \mu_{i_2,j_2}\otimes\cdots\otimes \mu_{i_m,j_m}\in M^{\otimes m}.
	\end{equation*}
Let $M_{i_1,j_1,\cdots,i_m,j_m}$ be the subspace of $M^{\otimes m}$ generated by $\mu_{i_1,j_1,\cdots,i_m,j_m}$.

\begin{prop}\label{pro:split}
	For $(i_1,j_1,\cdots,i_m,j_m)\in\mathbb{I}$, we have
	\begin{equation}\label{eq:split_sigma}
		\begin{split}
			& \sigma_k \circ \mu_{i_1,j_1,\cdots,i_m,j_m} = \omega^{i_k}\mu_{i_1,j_1,\cdots,i_m,j_m},\\ & \tau_k \circ \mu_{i_1,j_1,\cdots,i_m,j_m} = 
			\omega^{j_k}\mu_{i_1,j_1,\cdots,i_m,j_m}.
		\end{split}
	\end{equation}	
	Furthermore, the vector space $ M^{\otimes m}$ splits as
	\begin{equation}\label{eq:split_M}
		 M^{\otimes m} = \bigoplus_{(i_1,j_1,\cdots,i_m,j_m)\in\mathbb{I}}M_{i_1,j_1,\cdots,i_m,j_m}.
	\end{equation}
\end{prop}

\begin{proof}
	By Lemma~\ref{lem:conjTensor}, we have 
	\[\sigma_k \circ \mu_{i_1,j_1,\cdots,i_m,j_m} = \mu_{i_1,j_1}\otimes\cdots\otimes(\sigma\circ \mu_{i_k,j_k})\otimes\cdots\otimes \mu_{i_m,j_m}\]
	and 
	\[\tau_k \circ \mu_{i_1,j_1,\cdots,i_m,j_m} = \mu_{i_1,j_1}\otimes\cdots\otimes(\tau\circ \mu_{i_k,j_k})\otimes\cdots\otimes \mu_{i_m,j_m}.\]
	The formula \eqref{eq:split_sigma} then follows from Lemma~\ref{lem:split}.	The direct sum decomposition \eqref{eq:split_M} follows immediately from Lemma~\ref{lem:split}.
\end{proof}
We generalize the notations $A_1$ and $\iota$. Let $A_m$ be the subgroup of $\pu{m}$ generated by $\sigma_i$ and $\tau_i$ for $1\leq i\leq m$, and let $\oin\colon A_m\to \pu{m}$ be the inclusion. Then we have the representation $\oin^*(\Psi)$ of $A_m$, which is the restriction of $\Psi$ on $A_m$. 
\begin{prop}\label{pro:V2m}
	The subgroup $A_m$ of $\pu{m}$ is isomorphic to $(\Z/(p))^{2m}$, generated by $\sigma_i$ and $\tau_i$ for $1\leq i\leq m$.
\end{prop}
\begin{proof}
	First, we show that $A_m$ is an abelian group. 
	By the definitions \eqref{eq:def_sigma} and \eqref{eq:def_tau}, for $i\neq j$, we have $\sigma_i\sigma_j = \sigma_j\sigma_i$, $\tau_i\tau_j = \tau_j\tau_i$, $\sigma_i\tau_j = \tau_j\sigma_i$. A straightforward computation shows $\td{\sigma}\td{\tau} = \omega\td{\tau}\td{\sigma}$, and so we have $\sigma_i\tau_i = \tau_i\sigma_i$. Therefore, $A_m$ is an abelian group generated by $\sigma_i$, $\tau_i$ for $1\leq i\leq m$.

	A direct computations shows  $\sigma_i^p = \tau_i^p = 1$. Therefore, $A_m$ is isomorphic to $(\Z/(p))^t$ for some $t\leq 2m$. It suffices to find a surjective homomorphism $\phi\colon A_m\to (\Z/(p))^{2m}$. 
	Let
	\[\nu_k = \mu_{0,0,\cdots,1,0,\cdots,0,0} = \mu_{0,0}\otimes\cdots\otimes\underbrace{\mu_{1,0}}_{\text{$k$th tuple}}\otimes\cdots\otimes \mu_{0,0},\]
	\[\nu_k' = \mu_{0,0,\cdots,0,1,\cdots,0,0} = \mu_{0,0}\otimes\cdots\otimes\underbrace{\mu_{0,1}}_{\text{$k$th tuple}}\otimes\cdots \otimes \mu_{0,0}.\]
	We identify $\Z/(p)$ as the multiplicative group of complex $p$th roots of unity. By Proposition~\ref{pro:split}, $\nu_k$ and $\nu_k'$ are root vectors of $\oin^*(\Psi)$,  and the corresponding roots of $\oin^*(\Psi)$ take only $p$th roots of unity as values. Therefore, for $\al\in A_m$, we have $p$th roots of unity $\omega_k,\omega_k'$, $1\leq k\leq m$, such that 
	\begin{equation*}
		\al\circ\nu_k = \omega_k \nu_k,\ \al\circ\nu_k' = \omega_k' \nu_k'.
	\end{equation*}
	We define
	\begin{equation*}
		\phi(\al) = (\omega_1,\omega_1', \cdots,\omega_m,\omega_m').
	\end{equation*}
	By Proposition~\ref{pro:split}, $\phi(\sigma_i)$ and $\phi(\tau_i)$ generate $(\Z/(p))^{2m}$, and the proof is completed.
\end{proof}

By the Weyl group of $A_m$, we mean the quotient group $N/A_m$, where $N$ is the normalizer of $A_m$ in $\pu{m}$. The conjugation action of $N$ on $A_m$ passes to an action of $N/A_m$ on $A_m$, since $A_m$ is abelian. The following proposition is the special case $n = p^m$ of Theorem~8.5 of \cite{andersen2008classification}.
\begin{prop}\label{pro:VmaxAm}
	The homomorphism  $\oin: A_m\hookrightarrow \pu{m}$ is an inclusion of a maximal nontoral elementary abelian $p$-subgroup. Up to conjugacy, this is the unique maximal nontoral elementary abelian $p$-subgroup of $\pu{m}$.

	The Weyl group of $A_m$ is isomorphic to $Sp_{2m}(\F_p)$. Regarding $A_m\cong (\Z/(p))^{2m}$ as a $2m$-dimensional vector space over $\F/p$, the group $Sp_{2m}(\F_p)$ acts on $A_m$ by left multiplication of matrices. This is the conjugation action on $A_m$ of $Sp_{2m}(\F_p)$.
\end{prop}
\begin{remark}\label{rem:Vmax}
	In Theorem 3.1 of \cite{griess1991elementary}, the maximal non-toral elementary abelian $p$-group of $SL(n,\C)$ was identified, which implies the case for $PU(n)$. In Theorem~8.5 of \cite{andersen2008classification}, the non-toral elementary abelian $p$-group $A_m$ of $PU(n)$ and its Weyl group are identified, but without the explicit description given in this section.
\end{remark}



\section{The Chern classes of the conjugation representations of $A_m$}\label{sec:Chern}
In this section, we study the Chern classes of the conjugation representation of $\Psi$. 

In Section~\ref{sec:p-subgroup}, we showed  $A_m\cong(\Z/(p))^{2m}$ and that it is generated by $\sigma_i$, $\tau_i$ for $1\leq i\leq m$. We consider the cohomology of $BA_m$. The underlying additive group of the ring $\F_p$ is $\Z/(p)$, and we have the isomorphism of abelian groups
\[H^1(BA_m;\F_p)\cong \opn{Hom}(A_m,\Z/(p)).\]
Via this identification, we defined $a_i, b_i\in H^1(BA_m;\F_p)$ by 
\[a_i(\sigma_j) = b_i(\tau_j)= \delta_{ij},\ a_i(\tau_j) = b_i(\sigma_j) = 0.\]

Let $\beta$ be the Bockstein homomorphism and let $\xi_i = \beta(a_i)$, $\eta_i = \beta(b_i)$. Then $H^*(BA_m;\F_p)$ is the free graded commutative algebra over $\F_p$ generated by $a_i$, $b_i$, $\xi_i$ and $\eta_i$, for $1\leq i\leq m$. 

Let $P(A_m)$ be the subalgebra of $H^*(BA;\F_p)$ generated by $\xi_i$, $\eta_i$, $1\leq i\leq m$, which is a polynomial algebra over $\F_p$. Let $Q(A_m)$ be the subalgebra generated by $a_i, b_i$, $1\leq i\leq m$, which is an exterior algebra over $\F_p$.

The conjugation action of the Weyl group of $A_m$ induces an action of itself on $H^*(BA_m;\F_p)$. An explicit description of this action follows from Proposition~\ref{pro:VmaxAm}:
\begin{prop}\label{pro:Vmax}
	The conjugation action of the Weyl group $Sp_{2m}(\F_p)$ of $A_m$ induces an action on
	\[H^*(BA_m;\F_p) = P(A_m)Q(A_m)\]
	such that 
	\begin{itemize}
		\item it acts on $P(A_m)$ as described in Section~\ref{sec:Dickson} (with $x_i$ and $y_i$ replaced by $\xi_i$ and $\eta_i$), and
		\item it is compatible with products and the Bockstein homomorphism $\beta$. 
	\end{itemize}
\end{prop}
We consider $\gamma_{i}$, the $i$th Chern class of $\Psi$.  
\begin{prop}\label{pro:ChernClasses}
	We have
		\begin{equation*}
			\oin^*(\gamma_i) = 
			\begin{cases}
				(-1)^k C_k(\xi_1,\eta_1,\xi_2,\eta_2\cdots,\xi_m,\eta_m), \ i = p^{2m} - p^k\textrm{ for some $0\leq k\leq 2m$},\\
				0,\ \textrm{otherwise}.
			\end{cases}
		\end{equation*}
\end{prop}

\begin{proof}
	By Proposition~\ref{pro:split}, for each $(i_1,j_1\cdots,j_1,j_m)\in\mathbb{I}$, we have the representation $M_{i_1,j_1,\cdots,i_m,j_m}$ of $A_m$, of which the total Chern class is 
		\begin{equation*}
			c(M_{i_1,j_1,\cdots,i_m,j_m}) = 1 + \sum_{k=1}^m (i_k\xi_k + j_k\eta_k). 
		\end{equation*}
	Therefore, the splitting in Proposition \ref{pro:split} yields
		\begin{equation*}
			\prod_{(i_1,j_1,\cdots,i_m,j_m)\in\mathbb{I}}[1 + \sum_{k=1}^m (i_k\xi_k + j_k\eta_k)]
			 = \sum_{i=0}^{p^{2m}}\oin^*(\gamma_i).
		\end{equation*}
	The desired expression for $\oin^*(\gamma_i)$ then follows from Theorem \ref{thm:Dickson}.		
\end{proof}
For the following corollary, recall the polynomial $E_{n,i}$ defined in \eqref{eq:def_E}.
\begin{cor}\label{cor:topGamma}
	We have
		\begin{equation*}
			\oin^*(\gamma_{p^{2m}-1}) = E_{2m,2m}(\xi_1,\eta_1, \cdots,\xi_m,\eta_m)^{p-1}.
		\end{equation*}
\end{cor}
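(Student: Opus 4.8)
The plan is to compute $\theta^*(\gamma_{p^{2l}-1})$ directly from Theorem \ref{thm:ChernClasses} and then identify the resulting Dickson invariant $C_{2l,0}$ with a power of the top "Vandermonde-type" determinant $\Delta_{2l,2l}$. Applying Theorem \ref{thm:ChernClasses} with $i = p^{2l}-1 = p^{2l}-p^0$, i.e. $k=0$, gives
\begin{equation*}
	\theta^*(\gamma_{p^{2l}-1}) = (-1)^0 C_{2l,0}(\eta_1,\xi_1,\cdots,\eta_l,\xi_l) = C_{2l,0}(\eta_1,\xi_1,\cdots,\eta_l,\xi_l),
\end{equation*}
so the entire content of the corollary is the algebraic identity $C_{2l,0} = \Delta_{2l,2l}^{\,p-1}$ in the polynomial ring $R = \F_p[x_1,\dots,x_{2l}]$ (with the variables relabeled as the $\eta_i,\xi_i$).

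To prove that identity I would go back to the defining factorization \eqref{eq:Deltaf}, namely $\Delta_{2l}(X) = \Delta_{2l,2l}\, f_{2l}(X)$, and examine the constant term (the coefficient of $X^{q^0} = X$, since by \eqref{eq:fExpanded} the only surviving powers of $X$ in $f_{2l}(X)$ are powers of $q$, and with $q=p$ the smallest is $X^{p^0}=X$). By \eqref{eq:fExpanded} the coefficient of $X$ in $f_{2l}(X)$ is $(-1)^{2l+0}C_{2l,0} = C_{2l,0}$. On the other hand, from the product formula $f_{2l}(X) = \prod_{k_1,\dots,k_{2l}\in\F_p}\bigl(X - \sum_i k_i x_i\bigr)$, the coefficient of $X$ is, up to sign, the product of all the nonzero linear forms $\sum_i k_i x_i$; more precisely, writing $f_{2l}(X) = X \cdot g(X)$ where $g(X) = \prod_{(k_1,\dots,k_{2l})\neq 0}\bigl(X - \sum_i k_i x_i\bigr)$, the constant term of $g$ is $g(0) = \prod_{(k_1,\dots,k_{2l})\neq 0}\bigl(-\sum_i k_i x_i\bigr)$. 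Since there are $p^{2l}-1$ nonzero tuples and this is even (as $p$ is odd), the signs cancel and $C_{2l,0} = \prod_{v\neq 0}\ell_v$, the product over all nonzero vectors $v\in\F_p^{2l}$ of the corresponding linear form $\ell_v$. Now group these $p^{2l}-1$ linear forms into the $(p^{2l}-1)/(p-1)$ lines through the origin: each line contributes $p-1$ scalar multiples $c\ell$ of a fixed representative $\ell$, whose product is $\bigl(\prod_{c\in\F_p^\times}c\bigr)\ell^{p-1} = (-1)\ell^{p-1}$ by Wilson's theorem, but more robustly one just notes that $C_{2l,0}$ is visibly a $(p-1)$st power times the product of line representatives; so $C_{2l,0} = \pm\bigl(\prod_{\text{lines }L}\ell_L\bigr)^{p-1}$.

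Finally I must identify $\prod_{\text{lines}}\ell_L$ (up to sign and $(p-1)$st powers) with $\Delta_{2l,2l}$. By the parenthetical remark in Theorem \ref{thm:Dickson}(a), $\Delta_{2l,2l} = \Delta_{2l-1}(x_1,\dots,x_{2l})$, and iterating \eqref{eq:Deltaf} (or using the standard fact that $\Delta_{n-1}(x_1,\dots,x_n)$ equals, up to a nonzero scalar, the product of all linear forms $\sum_i k_i x_i$ with the last nonzero coefficient equal to $1$, i.e. one representative per line) gives exactly $\Delta_{2l,2l} = \pm\prod_{\text{lines}}\ell_L$. Combining, $C_{2l,0} = \pm \Delta_{2l,2l}^{\,p-1}$, and the sign is forced to be $+1$ because both sides are monic in a suitable monomial order (or by comparing the coefficient of the monomial $x_{2l}^{1+p+\cdots}$ on each side), yielding $\theta^*(\gamma_{p^{2l}-1}) = \Delta_{2l,2l}(\eta_1,\xi_1,\dots,\eta_l,\xi_l)^{p-1}$ after the relabeling. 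I expect the only mildly delicate point to be pinning down the overall sign and confirming that the relabeling of variables $x_1,\dots,x_{2l}\leftrightarrow \eta_1,\xi_1,\dots,\eta_l,\xi_l$ matches the ordering convention in Theorem \ref{thm:ChernClasses}; everything else is the classical Dickson computation of $C_{n,0}$ as a product of linear forms, which I would cite from \cite{wilkerson1983primer} rather than redo.
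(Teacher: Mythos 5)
Your argument is correct, but it takes a genuinely different (and longer) route than the paper's. Both proofs begin identically: Theorem \ref{thm:ChernClasses} with $k=0$ reduces the corollary to the purely algebraic identity $C_{2l,0}=\Delta_{2l,2l}^{p-1}$. The paper then finishes in one line: by Theorem \ref{thm:Dickson}(a), $C_{2l,0}=\Delta_{2l,0}/\Delta_{2l,2l}$, and $\Delta_{2l,0}=\Delta_{2l,2l}^{p}$ because the matrix defining $\Delta_{2l,0}$ is the entrywise $p$-th power of the one defining $\Delta_{2l,2l}$, and the Frobenius, being a ring endomorphism, commutes with taking determinants. You instead read off $C_{2l,0}$ as the coefficient of $X$ in the product formula for $f_{2l}(X)$, obtaining the product of all nonzero linear forms, group those forms into lines through the origin, and invoke the Moore-determinant factorization $\Delta_{2l,2l}=\prod_{L}\ell_{L}$ (which, as you note, follows by iterating \eqref{eq:Deltaf} down to $\Delta_0(x_1)=x_1$). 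That is a complete argument, and your sign worries resolve even more cheaply than you suggest: since $p-1$ is even, any ambiguity of the form $\pm\Delta_{2l,2l}$ vanishes upon raising to the $(p-1)$-st power, and the Wilson-theorem signs contribute $(-1)^{(p^{2l}-1)/(p-1)}=+1$ because $(p^{2l}-1)/(p-1)=\sum_{i=0}^{2l-1}p^{i}$ is a sum of an even number of odd terms; likewise the variable relabeling only permutes columns of a determinant that is then raised to an even power, and leaves the $GL$-invariant $C_{2l,0}$ untouched. What the Frobenius trick buys the paper is brevity and total freedom from sign bookkeeping; what your computation buys is the explicit description of $C_{2l,0}$ as the $(p-1)$-st power of a product of one linear form per line, which is more information than the corollary actually needs.
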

\begin{proof}
	Since $P(A_m)$ is a polynomial algebra over $\F_p$, we take $q = p$ in the equation \eqref{eq:def_E}. 
	A routine computation yields the relation
		\begin{equation*}
			E_{2m,0}(\xi_1,\eta_1, \cdots,\xi_m,\eta_m) = E_{2m,2m}(\xi_1,\eta_1, \cdots,\xi_m,\eta_m)^p.
		\end{equation*}
	The corollary then follows from (b) of Theorem~\ref{thm:Dickson}, which asserts
	\[E_{2m,0}(\xi_1,\eta_1, \cdots,\xi_m,\eta_m) = E_{2m,2m}(\xi_1,\eta_1, \cdots,\xi_m,\eta_m)C_{2m,0}(\xi_1,\eta_1, \cdots,\xi_m,\eta_m).\]
\end{proof}

\section{The classes $\al_i$}\label{sec:ImOftheta}
The group $PU(n)$ fits in a short exact sequence of Lie groups
	\begin{equation}\label{eq:PU_seq}
		1\to S^1\to U(n)\xrightarrow{f} PU(n)\to 1
	\end{equation}
which induces a homotopy fiber sequence
	\begin{equation*}
		BS^1\to BU(n)\xrightarrow{f} BPU(n).
	\end{equation*}
By \cite[Lemma 4.70]{hatcher2002algebraic}, since $BS^1$ is of the homotopy type $K(\Z,2)$ and since $BU(n)$ is simply connected, this homotopy fiber sequence is obtained by truncating the Puppe sequence of a homotopy fiber sequence of the form
\begin{equation}\label{eq:fiberseq}
	BU(n)\xrightarrow{f} BPU(n)\xrightarrow{\br}K(\Z,3).
\end{equation}
We regard the map $\br$ as a member of $H^3(BPU(n);\Z)$. Indeed, $\br$ is a generator of $H^3(BPU(n);\Z)$.

Let $n = p^m$. Consider the subgroup $\td{A}_m = f^{-1}(A_m)\cap SU(p^m)$ of $U(p^m)$. A routine computation shows that $\td{A}_m$ is the group generated by 
\begin{equation*}
	\omega, \td{\sigma}_i,\td{\tau}_i,\ 1\leq i\leq m
\end{equation*}
subjected to the relations
\begin{equation*}
	\omega^p = \td{\sigma}_i^p = \td{\tau}_i^p = 1,\  \omega\td{\sigma}_i = \td{\sigma}_i\omega,\ \omega\td{\tau}_i = \td{\tau}_i\omega,
\end{equation*}
and
\begin{equation*}
	\td{\sigma}_i\td{\tau}_j =
	\begin{cases}
		\td{\tau}_j\td{\sigma}_i,\ i\neq j,\\
		\omega\td{\tau}_i\td{\sigma}_i,\ i = j.
	\end{cases}
\end{equation*}
Indeed, $\td{A}_m$ is isomorphic to the extraspecial $p$-group $p_+^{1+2m}$.

We recall the notation for the cohomology of $BA_m$ in Proposition~\ref{pro:Vmax}:
\[H^*(BA_m;\F_p) = P(A_m)Q(A_m),\]
where 
\[P(A_m) = \F_p[\xi_1,\eta_1,\cdots,\xi_m,\eta_m],\ Q(A_m) = \Lambda_{\F_p}[a_1,b_1,\cdots,a_m,b_m].\]
Consider the central extension
\begin{equation}\label{eq:central_ext}
	1\to\Z/(p)\to\td{A}_m\xrightarrow{g}A_m\to 1.
\end{equation}

It is well known (\cite[I, Theorem 6.8]{adem2013cohomology})
that a central extension of $A_m$ with center $\Z/(p)$ 
corresponds to an element $\eg\in H^2(BA_m;\F_p)$ called the extension class of $g$. A routine computation with the bar complex of $A_m$ shows  $\eg = \sum_{i=1}^mb_ia_i$. 

The central extension \eqref{eq:central_ext} is associated to a homotopy fiber sequence 
\[B\Z/(p)\to B\td{A}_m\xrightarrow{g} BA_m,\]
which is obtained by truncating the Puppe sequence of the following homotopy fiber sequence: 
\begin{equation}\label{eq:fiberseqA}
	B\td{A}_m\xrightarrow{g} BA_m\xrightarrow{\eg}K(\Z/(p),2).
\end{equation}
The exact sequences \eqref{eq:PU_seq} and \eqref{eq:central_ext} form a commutative diagram
\begin{equation*}
	\begin{tikzcd}
		\Z/(p)\arrow[r]\arrow[d]&\td{A}_m\arrow[r,"g"]\arrow[d,"\td\iota"]&A_m\arrow[d,"\iota"]\\
		S^1\arrow[r]&U(p^m)\arrow[r,"f"]&\pu{m}
	\end{tikzcd}	
\end{equation*}
which yields a commutative diagram
\begin{equation}\label{eq:htpy_fib_diag}
	\begin{tikzcd}
		B\td{A}_m\arrow[r,"g"]\arrow[d,"\td{\oin}"]& BA_m\arrow[d,"\oin"]\arrow[r,"\eg"]& K(\Z/(p),2)\arrow[d,"\td\beta(\opn{id})"]\\
		BU(p^m)\arrow[r,"f"]& B\pu{m}\arrow[r,"\br"]& K(\Z,3)
	\end{tikzcd}
\end{equation}
where $\tilde{\beta}$ denotes the connecting homomorphism $H^*(-;\F_p)\to H^{*+1}(-;\Z)$, and $\opn{id}$ denotes the generator of $H^2(K(\Z/(p),2);\F_p)$ represented by the identity map of $K(\Z/(p),2)$.

Let $\bar{\br}\in H^3(B\pu{m};\F_p)$ be the mod $p$ reduction of $\br$. The diagram \eqref{eq:htpy_fib_diag} yields $\iota^*(\br) = \td\beta(\eg)$, and then
\begin{equation}\label{eq:thetax}
	\oin^*(\bar{\br}) = \beta(\eg) = \sum_{i=1}^m(-b_i\xi_i + a_i\eta_i)
\end{equation}
where $\beta$ denotes the Bockstein homomorphism in the mod $p$ Steenrod algebra. In \cite{gu2019some}, the author considers the integral cohomology classes 
	\begin{equation*}
		y_{p,k} =\tilde{\beta}\op^{p^k}\op^{p^{k-1}}\cdots\op^1(\bar\br)\in H^{2(p^{k+1}+1)}(B\pu{m};\Z)
	\end{equation*}
where $\op^i$ denotes the $i$th Steenrod reduced power operation. For $i>0$, let 
\begin{equation}\label{eq:Steenrod_alpha}
	\al_i = \beta\op^{p^{i-1}}\op^{p^{i-2}}\cdots\op^1(\bar{\br})\in H^{2(p^{i}+1)}(B\pu{m};\F_p).
\end{equation}
Then $\al_i$ is the mod $p$ reduction of $y_{p,i-1}$. Recall the polynomials $H_{2m,i}$ defined by \eqref{eq:def_H}. Here we have $K = \F_q = \F_p$. Let 
\[h_i =  H_{2m,i}(\xi_1,\eta_1,\cdots,\xi_m,\eta_m) = \sum_{j=1}^m(\xi_j\eta_j^{p^i} - \xi_j^{p^i}\eta_j).\]
A routine computation yields	
\begin{prop}\label{pro:alpha}
	For $i \geq 1$, we have $\oin^*(\alpha_i) = h_i$. \qed
\end{prop}
\begin{cor}\label{cor:alphanontrivial}
	We have $\al_i\neq 0$ for all $i\geq 1$.\qed
\end{cor}
By Lemma~\ref{lem:algIndependenceh} we have 
\begin{cor}\label{cor:algIndependence}
	The elements $\alpha_i$, $1\leq i\leq 2m$ are algebraically independent in $H^*(B\pu{m};\F_p)$.\qed
\end{cor}



Recall the subalgebra $\sG$ of $\hbpu$ defined in Definition~\ref{def:subalg_G}.
\begin{prop}\label{pro:spInv}
	In $\hbpu$, we have 
	\[\oin^*(\sG) = \opn{Im}\oin^*\cap P(A_m) = P(A_m)^{Sp_{2m}(\F_p)}.\]
\end{prop}
\begin{proof}
	The inclusion $\oin^*(\sG)\subseteq\opn{Im}\oin^*\cap P(A_m)$ is clear. Since elements in $\opn{Im}\oin^*$ are invariant under the action of the Weyl group of $A_m$, the inclusion 
	\[\opn{Im}\oin^*\cap P(A_m) \subseteq P(A_m)^{Sp_{2m}(\F_p)}\] follows from Proposition~\ref{pro:Vmax}. The inclusion $P(A_m)^{Sp_{2m}(\F_p)}\subseteq\oin^*(\sG)$ follows from Corollary~\ref{cor:sympInv}.
\end{proof}

For the rest of this section, let $P =P(A_m)$, $Q = Q(A_m)$ and let $Q^+$ be the $\F_p$-submodule of $Q$ generated by elements of positive dimensions. We have 
\begin{equation}\label{eq:PQ}
	H^*(BA_m;\F_p) = PQ = P\oplus PQ^+.
\end{equation}
Therefore, we have an isomorphism
\begin{equation}\label{eq:PQnilradical}
	P\hookrightarrow H^*(BA_m;\F_p)\twoheadrightarrow H^*(BA_m;\F_p)/PQ^+
\end{equation}
where the arrows are the obvious ones. Notice that $PQ^+$ is the nilradical of $H^*(BA_m;\F_p)$, which implies that $\oin^*\colon\hbpu\to H^*(BA_m;\F_p)$ induces a homomorphism
\[\oin^*_{\sN}\colon\hbpu/\sN\to P.\]
Proposition~\ref{pro:spInv} implies the following
\begin{cor}\label{cor:spInv}
	$\opn{Im}\oin^*_{\sN} = P(A_m)^{Sp_{2m}(\F_p)}$.\qed
\end{cor}

\section{Some consequences of Quillen's Theorem}\label{sec:Quillen}
We prove Theorem~\ref{thm:ChernClasses} and Theorem~\ref{thm:polynomial}. 
\begin{lem}\label{lem:Quillen}
	Let $\kappa: T\to\pu{m}$ be an inclusion of a maximal torus. 
	Let $u\in H^*(BPU(p^m);\F_p)$ such that $\kappa^*(u) = 0$ and $\oin^*(u) = 0$. Then $u$ is nilpotent. 
\end{lem}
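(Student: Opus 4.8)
The plan is to invoke Quillen's stratification theorem for equivariant cohomology, or more precisely its classical consequence describing $H^*(BG;\F_p)$ modulo nilpotents in terms of elementary abelian $p$-subgroups. Recall that for a compact Lie group $G$, Quillen's theorem implies that the kernel of the product restriction map
\begin{equation*}
	H^*(BG;\F_p)\longrightarrow \prod_{E} H^*(BE;\F_p),
\end{equation*}
taken over all elementary abelian $p$-subgroups $E\subseteq G$, consists precisely of nilpotent elements. So it suffices to show that if $\iota^*(\alpha)=0$ for a maximal torus $T$ and $\theta^*(\alpha)=0$ for the maximal nontoral $V^{2l}$, then $e^*(\alpha)=0$ for every elementary abelian $p$-subgroup $e:E\to\pu{l}$.

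First I would split the elementary abelian $p$-subgroups of $\pu{l}$ into two classes: the toral ones and the nontoral ones. For a toral $E$, the inclusion $e:E\to\pu{l}$ factors (up to conjugation) through the maximal torus $T$, say $e = \iota\circ j$ with $j:E\to T$; then $e^*(\alpha) = j^*\iota^*(\alpha) = 0$ by hypothesis, and since cohomology is conjugation-invariant this holds for all toral $E$. For a nontoral $E$, Proposition \ref{pro:Vmax} tells us that, up to conjugacy, every nontoral elementary abelian $p$-subgroup of $\pu{l}$ is \emph{contained in a conjugate of} $V^{2l}$ — more carefully, $V^{2l}$ is the unique \emph{maximal} nontoral one, so any nontoral $E$ lies in a conjugate of it, giving a factorization $e = \theta\circ k$ (after conjugating) with $k:E\to V^{2l}$; hence $e^*(\alpha) = k^*\theta^*(\alpha) = 0$. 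Combining the two cases, $\alpha$ lies in the kernel of the total restriction map, so $\alpha$ is nilpotent by Quillen.

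The step I expect to be the main obstacle is the nontoral case: one must be careful that ``maximal nontoral elementary abelian $p$-subgroup, unique up to conjugacy'' genuinely implies that \emph{every} nontoral elementary abelian $p$-subgroup is subconjugate to $V^{2l}$, rather than merely that the maximal ones are all conjugate to it. This is true here because any nontoral elementary abelian $p$-subgroup is contained in a maximal one (a dimension/finiteness argument), and all maximal ones are conjugate to $V^{2l}$ by Proposition \ref{pro:Vmax}; I would state this explicitly, perhaps citing \cite{andersen2008classification} again or Corollary \ref{cor:Vmax}, which already packages essentially this fact. A secondary point worth a sentence is ensuring we apply Quillen's theorem in the correct form for compact Lie groups (not just finite groups); the relevant statement is standard and appears in \cite{adem2013cohomology}, and $\pu{l}$ is a compact connected Lie group, so it applies. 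The remaining bookkeeping — that restriction commutes with inner automorphisms and that factorizations through $T$ or $V^{2l}$ exist — is routine.
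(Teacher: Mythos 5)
Your proposal is correct and follows essentially the same route as the paper: both invoke Quillen's theorem that the kernel of the restriction map to all elementary abelian $p$-subgroups is nilpotent, and then use the toral/nontoral dichotomy together with Proposition \ref{pro:Vmax} to reduce the check to $T$ and $V^{2l}$. In fact you spell out the subconjugacy step for nontoral subgroups more carefully than the paper does.
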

	
\begin{proof}
	We recall the work of Quillen \cite{quillen1971spectrum}. Let $G$ be a compact Lie group, and  $\mathscr{A}$ the category of elementary abelian $p$-subgroups of $G$ and inclusions, and define the functor
	\begin{equation*}
		\mathscr{H}: \mathscr{A}^{op}\to \opn{Vect}\F_p,\ A\mapsto H^*(BA;\F_p)
	\end{equation*}
	where $\opn{Vect}\F_p$ denotes the category of $\F_p$-vector spaces. We have a canonical homomorphism $\Theta \colon H^*(BG;\F_p)\to\opn{lim}\mathscr{H}$.
	
	Quillen \cite{quillen1971spectrum} shows 
	\begin{equation}\label{eq:tautological}
		\opn{Ker}\Theta\subseteq\sN.
	\end{equation}
	It follows from Proposition \ref{pro:Vmax} that, up to conjugacy,  $\oin: A_m\to\pu{m}$ is the inclusion of the unique maximal non-toral elementary abelian $p$-group. The lemma then follows from \eqref{eq:tautological}. 
\end{proof}

\begin{thmrep}[\ref{thm:ChernClasses}]
	The Chern class $\gamma_i$ is  nilpotent if
	\begin{enumerate}[label = (\alph*)]
		\item\label{ite:Chern1} $i$ is odd, or
		\item \label{ite:Chern2} $i > p^{2m} - p^m$ and $i\neq p^{2m} - p^k$ for any positive integer $k$.
	\end{enumerate} 
\end{thmrep}

\begin{proof}
	In this proof, we used the terminology ``Chern classes'' to refer to the mod $p$ reduction of the conventional Chern classes, which are integral cohomology classes by definition.

	Let $\kappa: T\to\pu{m}$ be as in Lemma~\ref{lem:Quillen}. Let $f\colon U(p^m)\to\pu{m}$ be the quotient map. Let $\td{T} = f^{-1}(T)$. Then $\td{T}$ is a maximal torus of $U(p^m)$, and 
	\[H^*(B\td{T};\F_p) = \F_p[t_1,\cdots,t_{p^m}]\]
	where $t_i$ is the first Chern class of a $1$-dimensional direct summand of the universal vector bundle on $B\td{T}$. A routine computation shows that 
	\[f^*\colon H^*(BT;\F_p)\to H^*(B\td{T};\F_p)\]
	is an injection with image the sub-algebra generated by $1$ and $t_i - t_j$, for $1\leq i,j\leq p^m$. Therefore, we identify elements of $H^*(BT;\F_p)$ with their images in $H^*(B\td{T};\F_p)$ via $f^*$.

	Let $\epsilon_{ij}\in M(p^m)$ be the matrix of which the entry on the $i$th row and $j$th column is $1$, whereas the other entries are $0$. 
	The restriction of $\kappa^*(\Psi)$ splits into a direct sum of $1$-dimensional representations of the form $\C\epsilon_{ij}$, of which the first Chern class is $t_i - t_j$. Therefore, the total Chern class of $\kappa^*(\Psi)$ is 
	\begin{equation}\label{eq:torusChernClass}
		\begin{split}
			c(\kappa^*(\Psi)) & = \prod_{1\leq i,j\leq p^m}[1 + (t_i - t_j)] = \prod_{1\leq i < j\leq p^m}[1 - (t_i - t_j)^2].
		\end{split}	
	\end{equation}
	Therefore, $\kappa^*(c_i(\Psi)) = c_i(\kappa^*(\Psi)) = 0$ if
	\begin{enumerate}
		\item $i$ is odd, or
		\item $i > p^{2m} - p^m$.	
	\end{enumerate} 
	Comparing this with Proposition~\ref{pro:ChernClasses}, it follows that $\oin^*(c_i(\Psi)) = 0$ and $\kappa^*(c_i(\Psi)) = 0$ if \ref{ite:Chern1} or \ref{ite:Chern2} holds. The desired conclusion then follows from Lemma~\ref{lem:Quillen}.	
\end{proof}

The equation \eqref{eq:torusChernClass} holds for integral cohomology as well, from which we deduce
\begin{cor}\label{cor:torsionChernClass}
	In the integral cohomology, $c_i(\Psi)$ is torsion if 
	\begin{itemize}
		\item $i$ is odd, or
		\item $i > p^{2m} - p^m$.
	\end{itemize}
	\qed
\end{cor}
\begin{lem}\label{lem:kappaAlpha}
	$\kappa^*(\al_i) = 0$.
\end{lem}
\begin{proof}
	This follows from the facts that $\al_i$'s are mod $p$ reductions of torsion classes and that $H^*(BT;\Z)$ is torsion-free.
\end{proof}
\begin{thmrep}[\ref{thm:polynomial}]
	The following relations hold in $\hbpu$.
	\begin{equation}\label{eq:relation1'}
		\sum_j^{i-1}\al_{i-j}^{p^j}\gmm{j} \equiv \sum_{j=i+1}^{2m}\al_{j-i}^{p^i}\gmm{j} \pmod{\sN}, \ 0\leq i\leq m-1,		
	\end{equation}

	\begin{equation}\label{eq:relation2'}
		\begin{split}
			\gmm{i} \equiv
			\sum_{j=m-i}^m (-1)^{m+i+j}P_{m-i,j}(\al_1,\cdots,\al_{2j-1})^{p^{m-j}}&\gmm{m+j}
			\pmod{\sN}, \\
			& 0\leq i\leq m-1,
		\end{split}
	\end{equation}

	\begin{equation}\label{eq:relation4'}
		\al_i \equiv \sum_{j=1}^{2m-1}(-1)^{j+1} R_{i,j}(\gmm{0},\cdots,\gmm{2m-1})\al_j\pmod{\sN},\ i\geq 2m,
	\end{equation}

	\begin{equation}\label{eq:relation3'}
		D_i(\al_1,\cdots,\al_{2m})\equiv \gmm{i}  D_{2m}(\al_1\cdots,\al_{2m-1}) \pmod{\mathscr{N}},\ 0\leq i\leq 2m.
	\end{equation}
\end{thmrep}
\begin{proof}
	Let $LHS$ and $RHS$ denote the two sides of the modular equivalences. To show each of the three modular equivalences, by Lemma~\ref{lem:Quillen}, it suffices to show $\oin^*(LHS) = \oin^*(RHS)$ and $\kappa^*(LHS) = \kappa^*(RHS)$. 

	We prove \eqref{eq:relation1'}. By Lemma~\ref{lem:kappaAlpha}, we have $\kappa^*(\al_i) = 0$ and then 
	\[\kappa^*(LHS) = \kappa^*(RHS) = 0.\]
	The relation $\oin^*(LHS) = \oin^*(RHS)$ follows from \eqref{eq:sympInv1} in Theorem~\ref{thm:sympInv}.
	
	We prove \eqref{eq:relation2'}. By Corollary~\ref{cor:torsionChernClass}, we have $\kappa^*(LHS) = 0$. By Lemma~\ref{lem:kappaAlpha}, we have $\kappa^*(RHS) = 0$. The relation $\oin^*(LHS) = \oin^*(RHS)$ follows from \eqref{eq:sympInv2} in Theorem~\ref{thm:sympInv}.

	We prove \eqref{eq:relation4'}. By Lemma~\ref{lem:kappaAlpha}, we have $\kappa^*(LHS)=\kappa^*(RHS) = 0$.
	By Proposition~\ref{pro:h2m}, we have $\oin^*(LHS) = \oin^*(RHS)$.

	We prove \eqref{eq:relation3'}. Again, the relation
	\[\kappa^*(LHS) = \kappa^*(RHS) = 0\]
	follows from Lemma~\ref{lem:kappaAlpha}. By Corollary~\ref{cor:CandD}, we have $\oin^*(LHS) = \oin^*(RHS)$.
\end{proof}

We conclude this section with 
\begin{prop}\label{pro:phi_alpha}
	Let $\sG$ be the $\F_p$-subalgebra of $\hbpu$ generated by $1$ and $\al_i$ for $1\leq i\leq 2m-1$ and $\gmm{i}$ for $m\leq i\leq 2m-1$. Let $\varphi(\underline{\al})$ be a polynomial in the classes $\alpha_i$ for $i > 0$ such that $\varphi(\underline{\al})\in H^k(B\pu{m};\F_p)$ for some $k > 0$. Then we have the equality of $\F_p$-submodules of $\hbpu/\sN$ 
	\[\hbpu\varphi(\underline{\al})\equiv\sG\varphi(\underline{\al})\pmod{\sN}.\]
\end{prop}
\begin{proof}
	Throughout this proof, we set  $\varphi = \varphi(\underline{\al})$, $P = P(A_m)$, $Q = Q(A_m)$, and $Q^+$ the $\F_p$-submodule of $Q$ consisting of elements of positive dimensions. Then we have 
	\[H^*(BA_m;\F_p) \cong P\otimes Q = P\oplus PQ^+.\]
	Notice that elements in $PQ^+$ are nilpotent. For $u\in\hbpu$, let $v = \oin^*(u) = v_1 + v_2$ where $v_1\in P$ and $v_2\in PQ^+$. By Theorem~\ref{thm:sympInv}, there is $u_1\in\sG$ such that $\oin^*(u_1)=v_1$. 
	
	Let $u_2 = u - u_1$. Then $\oin^*(u_2) = v_2\in PQ^+$, which implies that $\oin^*(u_2)$ is nilpotent, and so is $\oin^*(u_2\varphi)$. 

	Furthermore, since $\kappa^*(\al_i) = 0$, we have $\kappa^*(u_2\varphi) = 0$. By Lemma~\ref{lem:Quillen}, $u_2\varphi$ is nilpotent, and the proof is completed.
\end{proof}

\section{The mod $p$ reduction of torsion cohomology classes}\label{sec:torsion}
In this section we prove 
\begin{thmrep}[\ref{thm:ideals}]
	Regarding $\sT$, $\sJ$ and $\sN$ as $\F_p$-submodules of $\hbpu$, we have 
	\[\sT \equiv \sJ \pmod{\sN}.\]
\end{thmrep}

 Recall the homomorphism $g\colon \td{A}_m\to A_m$ considered in Section~\ref{sec:ImOftheta}. We have the homotopy commutative diagram
 \begin{equation}\label{eq:commDiag}
	\begin{tikzcd}
		B\td{A}_m\arrow[r,"g"]\arrow[d,"\td{\oin}"]& BA_m\arrow[d,"\oin"]\\
		BU(p^m)\arrow[r,"f"]& BPU(p^m)
	\end{tikzcd}
\end{equation}

 Let $J$ be the Jacobson radical of $H^*(B\td{A}_m;\F_p)$. Consider  the composition
\[\td{g}^*\colon H^*(BA_m;\F_p)\xrightarrow{g^*} H^*(B\td{A}_m;\F_p)\to H^*(B\td{A}_m;\F_p)/J\]
where the second arrow is the quotient homomorphism. Let $I$ be the ideal of $P(A_m)$ generated by $h_i$ for $1\leq i\leq 2m-1$. By Proposition~\ref{pro:h2m}, we have $h_i\in I$ for $i\geq 1$.  
The following is an immediate consequence of Theorem~10.1 of \cite{benson1992cohomology} and the erratum \cite{benson1993erratum}.
\begin{lem}\label{lem:KerOfg}
	$\opn{Ker}\td{g}^*|_{P(A_m)} \subseteq I$.\qed
\end{lem}

\begin{prop}\label{pro:KerOfg}
	 $\opn{Ker}g^*|_{P(A_m)} = \opn{Ker}\td{g}^*|_{P(A_m)} = I$.
\end{prop}
\begin{proof}
	By Lemma~\ref{lem:KerOfg}, we have 
	$\opn{Ker}\td{g}^*|_{P(A_m)} \subseteq I$. On the other hand, 
	We have 
	\[\opn{Ker}g^*|_{P(A_m)} \subseteq \opn{Ker}\td{g}^*|_{P(A_m)}\] 
	by the definitions and $g^*$ and $\td{g}^*$.  
		
	Consider the classes $\al_i\in\hbpu$. We have $f^*(\alpha_i)=0$ since $\al_i$ are mod $p$ reductions of torsion classes. 	By \eqref{eq:commDiag}, we have 
	\[g^*(h_i) = g^*\oin^*(\al_i) = \td{\oin}^*f^*(\al_i) = 0.\]
	Therefore, we have $h_i\in\opn{Ker}g^*|_{P(A_m)}$, and then $I\subseteq\opn{Ker}g^*|_{P(A_m)}$. 
\end{proof}

\begin{lem}\label{lem:IdealI}
	In $P(A_m)$, we have
	$\oin^*(\sJ) = P(A_m)^{\spl{m}}\cap I$.
\end{lem}
\begin{proof}
	By Corollary~\ref{cor:sympInv}, the ideal of $P(A_m)^{\spl{m}}$ generated by $h_i$ for $1\leq i\leq 2m-1$ is precisely $\oin^*(\sJ)$. The lemma then follows immediately from Proposition~\ref{pro:idealH}. 
\end{proof}


\begin{proof}[Proof of Theorem~\ref{thm:ideals}]
	Recall \eqref{eq:PQ}:
	\[H^*(BA_m;\F_p) = PQ = P\oplus PQ^+\]
	in which we have $P =P(A_m), Q = Q(A_m)$, and $Q^+$ the $\F_p$-submodule of $Q$ generated by elements of positive dimensions.
	
	Let $u\in\sT$, $v = \oin^*(u) = v_1 + v_2$, where $v_1\in P$ and $v_2\in PQ^+$. Since $P$ and $PQ^+$ are invariant subspaces of the $\spl{m}$-action on $H^*(BA_m;\F_p)$, we have $v_1\in P^{\spl{m}}$. 
	
	Since $u\in\sT$, we have $f^*(u) = 0$, and by \eqref{eq:commDiag}, we have $v\in\opn{Ker}(g^*)\subseteq\opn{Ker}(\td{g}^*)$. Since $v_2\in PQ^+$, $v_2$ is nilpotent. Therefore we have $v_2\in \opn{Ker}(\td{g}^*)$. By Proposition~\ref{pro:KerOfg}, we have 
	\begin{equation*}
		v_1 = v - v_2\in \opn{Ker}\td{g}^*|_{P} = I.
	\end{equation*}
	By Lemma~\ref{lem:IdealI}, we have
	\begin{equation*}
		v_1\in  P^{\spl{m}}\cap I = \oin^*(\sJ).
	\end{equation*}
	Therefore, we may take $u_1\in\sJ$ such that $v_1 = \oin^*(u_1)$. Let $u_2 = u - u_1$. Since $u, u_1\in\sT$, we have
	\[\kappa^*(u_2) = \kappa^*(u) - \kappa^*(u_1) = 0.\]
	On the other hand, the class $\oin^*(u_2) = v_2$ is nilpotent. Therefore,  $\oin^*(u_2^r) = 0$ for some $r\in\mathbb{N}$. 	By Lemma~\ref{lem:Quillen}, we have $u_2^r\in\sN$, which yields $u_2\in\sN$. We have shown $\sT\subseteq \sJ + \sN$. 

	Since the classes $\al_i$ are mod $p$ reductions of integral torsion cohomology classes, we have $\sJ\subseteq\sT$. The proof is completed. 	
\end{proof}

\bibliographystyle{alpha}
\bibliography{RefConjRep}
\end{document}